\begin{document}
\title[ Positive solutions ]{Positive solutions of a nonlinear fourth-order integral boundary value problem}
\author[S. Benaicha, F. Haddouchi]{Slimane Benaicha, Faouzi Haddouchi}
\address{Slimane Benaicha \\
Laboratory of Fundamental and Applied Mathematics of Oran (LMFAO) \\
Department of Mathematics, University of Oran 1 Ahmed Benbella, 31000 Oran,
Algeria} \email{slimanebenaicha@yahoo.fr}
\address{Faouzi Haddouchi\\
Department of Physics, University of Sciences and Technology of
Oran-MB, El Mnaouar, BP 1505, 31000 Oran, Algeria \\
Laboratory of Fundamental and Applied Mathematics of Oran (LMFAO)\\
Department of Mathematics, University of Oran 1 Ahmed Benbella, 31000 Oran,
Algeria}
\email{fhaddouchi@gmail.com}
\subjclass[2000]{34B15, 34B18}
\keywords{Positive solutions; Krasnoselskii's  fixed point theorem; Fourth-order integral boundary
value problems; Existence; Cone}

\begin{abstract}
In this paper, the existence of positive solutions for a nonlinear fourth-order two-point boundary value problem with integral condition is investigated. By using Krasnoselskii's fixed point theorem on cones, sufficient conditions for the existence of at least one positive solutions are obtained.
\end{abstract}

\maketitle \numberwithin{equation}{section}
\newtheorem{theorem}{Theorem}[section]
\newtheorem{lemma}[theorem]{Lemma}
\newtheorem{definition}[theorem]{Defnition}
\newtheorem{proposition}[theorem]{Proposition}
\newtheorem{corollary}[theorem]{Corollary}
\newtheorem{remark}[theorem]{Remark}
\newtheorem{exmp}{Example}[section]

\section{Introduction}
In this work, we study the existence of positive solutions of a nonlinear two-point boundary value problem (BVP) for the following fourth-order differential equation:

\begin{equation} \label{eq-1.1}
{u^{\prime \prime \prime \prime }}(t)+f(u(t))=0,\ t\in(0,1),
\end{equation}
\begin{equation} \label{eq-1.2}
u^{\prime}(0)=u^{\prime}(1)=u^{\prime \prime}(0)=0, \ u(0)=\int_{0}^{1}a(s)u(s)ds,
\end{equation}
where
\begin{itemize}
\item[(H1)] $f\in C([0,\infty),[0,\infty))$;
\item[(H2)] $a\in C([0,1],[0,\infty))$ and $0<\int_{0}^{1}a(s)ds<1$.
\end{itemize}

Fourth-order ordinary differential equations are models for bending or deformation of elastic beams, and therefore have important applications in engineering and physical sciences. Recently, the two-point and multi-point boundary value problems for fourth-order nonlinear differential equations have received  much attention from many authors. Many authors have studied the beam equation under various boundary conditions and by different approaches.

In 2009, Graef et al.\cite{Graef3} considered the fourth order three-point boundary value problem
\begin{equation} \label{eq-1.3}
{u^{\prime \prime \prime \prime }}(t)=g(t)f(u(t)), \ t\in(0,1),
\end{equation}
together with the boundary conditions
\begin{equation} \label{eq-1.4}
u(0)=u^{\prime}(0)=u^{\prime \prime}(\beta)=u^{\prime \prime}(1)=0.
\end{equation}

In 2006, Anderson and Avery \cite{Ander}, studied the following fourth order right focal four-point boundary value problem
\begin{equation} \label{eq-1.5}
{u^{\prime \prime \prime \prime }}(t)+f(u(t))=0, \ 0<t<1,
\end{equation}
\begin{equation} \label{eq-1.6}
u(0)=u^{\prime}(q)=u^{\prime \prime}(r)=u^{\prime \prime \prime}(1)=0.
\end{equation}

In 2011, Xiading Han et al.\cite{Han}, considered the nonlocal fourth-order boundary value problem with variable parameter
\begin{equation} \label{eq-1.7}
{u^{\prime \prime \prime \prime }}(t)+B(t)u^{\prime \prime}(t)=\lambda f(t,u(t), u^{\prime \prime}(t)), \ 0<t<1,
\end{equation}
\begin{equation} \label{eq-1.8}
u(0)=u(1)=\int_{0}^{1}p(s)u(s)ds, \ u^{\prime \prime}(0)=u^{\prime \prime}(1)=\int_{0}^{1}q(s)u^{\prime \prime}(s)ds.
\end{equation}

In 2013, Yan Sun and Cun Zhu \cite{Sun}, considered the singular fourth-order three-point boundary value problem
\begin{equation} \label{eq-1.9}
{u^{\prime \prime \prime \prime }}(t)+f(t,u(t))=0, \ 0<t<1,
\end{equation}
\begin{equation} \label{eq-1.10}
u(0)=u^{\prime}(0)=u^{\prime \prime}(0)=0 , \ u^{\prime \prime}(1)-\alpha u^{\prime \prime}(\eta)=\lambda.
\end{equation}

In 2014, Xiaorui Liu and Dexiang Ma \cite{Liu}, considered the third-order two-point boundary value problem
\begin{equation} \label{eq-1.11}
{u^{\prime \prime \prime }}(t)+f(u(t))=0, \ 0<t<1,
\end{equation}
\begin{equation} \label{eq-1.12}
u^{\prime}(0)=u^{\prime}(1)=0 , \ u(0)=\int_{0}^{1}k(s)u(s)ds,
\end{equation}
and in 2015, Wenguo shen \cite{Shen}, studied the fourth-order second-point nonhomogeneous singular boundary value problem
\begin{equation} \label{eq-1.9}
{u^{\prime \prime \prime \prime }}(t)+a(t)f(u(t))=0, \ 0<t<1,
\end{equation}
\begin{equation} \label{eq-1.10}
u(0)=\alpha, \ u(1)=\beta, \ u^{\prime}(0)=\lambda, \ u^{\prime}(1)=-\mu.
\end{equation}

For some other results on boundary value problem, we refer the reader to the papers
\cite{Alves, Graef1, Graef2, Hend, Kos, Ma1, Ma2, Ping, Webb, Yao, Yang, Zhang}.

To the authors’ knowledge, there are few papers that have considered the existence of solutions for fourth-order two-point boundary value problem with integral condition. Motivated by the works mentioned above, the aim of this paper is to establish some sufficient conditions for the existence of at least one positive solutions of the BVP \eqref{eq-1.1} and \eqref{eq-1.2}.

We shall first construct the Green's function for the associated linear boundary value problem and then determine the properties of the Green's function for associated linear boundary value problem. Finally, existence results for at least one positive solution for the above problem are established when $f$ is superlinear or sublinear.
As applications, some interesting examples are presented to illustrate the main results.

\section{Preliminaries}
We shall consider the Banach space $C([0,1])$ equipped with the sup norm
\[\|u\|=\\sup_{t\in[0, 1]}|u(t)|\]

\begin{definition}\label{def 2.1}
Let $E$ be a real Banach space. A nonempty, closed, convex set $
K\subset E$ is a cone if it satisfies the following two conditions:

(i) $x\in K$, $\lambda \geq 0$ imply $\lambda x\in K$;

(ii) $x\in K$, $-x\in K$ imply $x=0$.
\end{definition}

\begin{definition}\label{def 2.2}
An operator $T:E\rightarrow E$ \ is completely continuous if it is continuous
and maps bounded sets into relatively compact sets.
\end{definition}

To prove some of our results, we will use the following fixed point theorem, which is due to Krasnoselskii's \cite{Krasn}.

\begin{theorem}  \label{thm 2.3}\cite{Krasn}.
Let $E$ be a Banach space, and let $K\subset E$, be a cone. Assume that $%
\Omega_{1}$ and $\Omega_{2}$ are open subsets of $E$ with $0\in \Omega _{1}$,
$\Omega _{1}\subset \Omega_{2}$ and let
\[
A:K\cap (\overline{
\Omega_{2}}\backslash \Omega_{1})\rightarrow K
\]
be a completely continuous operator such that

(i) $\ \left\Vert Au\right\Vert \leq \left\Vert u\right\Vert ,$ $u\in K\cap
\partial
\Omega _{1}$, and $\left\Vert Au\right\Vert \geq \left\Vert u\right\Vert ,$
$u\in K\cap \partial \Omega_{2}$; or

(ii) $\left\Vert Au\right\Vert \geq \left\Vert u\right\Vert ,$ $u\in K\cap
\partial
\Omega_{1}$, and \ $\left\Vert Au\right\Vert \leq \left\Vert u\right\Vert ,$
$u\in K\cap \partial \Omega_{2}.$

Then $A$ has a fixed point in $K\cap (\overline{\Omega _{2}}$ $\backslash $ $
\Omega_{1})$.
\end{theorem}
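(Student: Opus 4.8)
The plan is to prove this (it is Krasnoselskii's compression--expansion theorem) through the fixed-point index $i(A,K\cap\Omega,K)$ for completely continuous self-maps of the cone $K$. Such an index is built from the Leray--Schauder degree: one extends the map to a compact, $K$-valued map on the relevant closure (a Dugundji-type extension) and composes with a retraction $r\colon E\to K$, obtaining an integer index with normalization ($i=1$ when the map is a constant lying in $K\cap\Omega$), additivity/excision, homotopy invariance, and the decisive solution property: if $i(A,K\cap\Omega,K)\neq 0$ then $A$ has a fixed point in $K\cap\Omega$. I would take these four properties as the foundation. Throughout I may assume $A$ has no fixed point on $K\cap\partial\Omega_1$ or $K\cap\partial\Omega_2$, since a boundary fixed point proves the theorem outright.

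First I would record two index computations on a single region $\Omega$ with $\theta\in\Omega$. \emph{Lemma I (compression).} If $\|Au\|\le\|u\|$ for $u\in K\cap\partial\Omega$, then $i(A,K\cap\Omega,K)=1$. I prove this with the homotopy $h(\tau,u)=\tau Au$, $\tau\in[0,1]$: a coincidence $\tau Au=u$ on $K\cap\partial\Omega$ forces $\|u\|=\tau\|Au\|\le\tau\|u\|\le\|u\|$, and since $\theta\in\Omega$ is interior we have $\|u\|>0$ on $K\cap\partial\Omega$, so $\tau=1$ and $Au=u$, which is excluded; at $\tau=0$ the map is the constant $\theta\in\Omega$. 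Homotopy invariance and normalization then give index $1$. \emph{Lemma II (expansion).} If $\|Au\|\ge\|u\|$ for $u\in K\cap\partial\Omega$, then $i(A,K\cap\Omega,K)=0$. The idea is dual: homotope $A$ to a map whose range misses $\overline\Omega$, hence has index $0$, for instance $H(\tau,u)=Au+\tau e$ with a fixed $e\in K\setminus\{\theta\}$ and $\tau$ increasing; since $A(K\cap\overline\Omega)$ is bounded, for $\tau$ large the perturbed map has no fixed point at all, while the expansion inequality must be used to keep the homotopy admissible on $K\cap\partial\Omega$.

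With both lemmas available I would combine them by excision. Extend $A$ to a completely continuous, $K$-valued map on $K\cap\overline{\Omega_2}$ that agrees with the original on $K\cap\partial\Omega_1$ and $K\cap\partial\Omega_2$, so the norm inequalities persist on both boundaries. In case (i), Lemma I gives $i(A,K\cap\Omega_1,K)=1$ and Lemma II gives $i(A,K\cap\Omega_2,K)=0$; additivity then yields $i\bigl(A,\,K\cap(\Omega_2\setminus\overline{\Omega_1}),\,K\bigr)=0-1=-1\neq 0$, and the solution property produces a fixed point in $K\cap(\overline{\Omega_2}\setminus\Omega_1)$. Case (ii) is identical with the roles of the two regions reversed, giving index $1-0=1\neq 0$ and the same conclusion.

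The main obstacle is Lemma II. Lemma I is clean, but the outward homotopy in the expansion case is delicate: the bound $\|Au\|\ge\|u\|$ does not by itself prevent the additive perturbation $Au+\tau e$ from producing a coincidence on $K\cap\partial\Omega$, so admissibility must be argued with care. The cleanest route is to replace the raw inequality by its consequences $Au\neq\mu u$ for every $\mu\in(0,1]$ and $\inf_{u\in K\cap\partial\Omega}\|Au\|>0$ (the latter holding because $\theta\in\Omega$ keeps $K\cap\partial\Omega$ away from $\theta$), which is precisely the hypothesis under which the standard cone index-$0$ computation goes through. A secondary technical point I would not skip is the Dugundji extension of $A$ from the annular set $K\cap(\overline{\Omega_2}\setminus\Omega_1)$ to $K\cap\overline{\Omega_2}$, keeping it completely continuous, $K$-valued, and unchanged on the two boundaries, so that the two single-region index computations are legitimate.
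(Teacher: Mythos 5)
The paper does not prove this statement at all: Theorem \ref{thm 2.3} is quoted verbatim with a citation to Krasnoselskii's book \cite{Krasn} and is used as a black box, so there is no ``paper proof'' to match. Judged on its own merits, your proposal is the standard modern proof via the fixed-point index for compact maps of a cone (the route of Guo--Lakshmikantham, Amann, Deimling), and its architecture is sound: Lemma I (index $1$ under $\|Au\|\le\|u\|$, via the homotopy $\tau Au$ and the fact that $0\in\Omega$ keeps $\|u\|>0$ on $K\cap\partial\Omega$) is correct as written; your treatment of Lemma II is also the right one --- you correctly recognize that the naive admissibility argument for $H(\tau,u)=Au+\tau e$ fails pointwise in a general (non-normal) cone, since $\|u\|\le\|Au\|$ with $u-Au\in K$ yields no contradiction when the norm is not monotone on $K$, and you correctly replace the raw inequality by its consequences $Au\ne\mu u$ for $\mu\in(0,1]$ together with $\inf_{K\cap\partial\Omega}\|Au\|>0$, which is exactly the hypothesis of the standard index-$0$ lemma (Guo--Lakshmikantham, Lemma 2.3.2). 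The excision/additivity bookkeeping ($0-1=-1$ in case (i), $1-0=1$ in case (ii)) and the Dugundji extension step are handled correctly, including the point that the fixed point produced lies in the annulus where the extension coincides with $A$. By contrast, Krasnoselskii's original argument in \cite{Krasn} is not degree-theoretic (it proceeds through Schauder's theorem and retraction/iteration constructions), so your route is genuinely different from the cited source: the index proof is shorter and yields the sharper topological information (a nonzero index on the annulus), at the cost of presupposing the cone index machinery.

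Two hypotheses suppressed in the paper's transcription deserve explicit mention in your write-up, since your argument silently uses both: the sets $\Omega_1,\Omega_2$ must be \emph{bounded} (otherwise complete continuity does not make $A(K\cap\overline{\Omega}_2)$ bounded, your large-$\tau$ termination of the translation homotopy fails, and the index itself is not defined via Leray--Schauder degree of compact maps), and one needs $\overline{\Omega}_1\subset\Omega_2$ rather than merely $\Omega_1\subset\Omega_2$ (otherwise the annular region in the additivity step is not well separated and the decomposition $i(A,K\cap\Omega_2,K)=i(A,K\cap\Omega_1,K)+i\bigl(A,K\cap(\Omega_2\setminus\overline{\Omega}_1),K\bigr)$ is not legitimate). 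With those standing assumptions restored --- as they are in the classical statement --- your outline is a complete and correct proof strategy, with the one remaining task being to either prove or precisely cite the index-$0$ lemma you isolate.
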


Consider the two-point boundary value problem

\begin{equation} \label{eq-2.1}
{u^{\prime \prime \prime \prime }}(t)+y(t)=0,\ t\in(0,1),
\end{equation}
\begin{equation} \label{eq-2.2}
u^{\prime}(0)=u^{\prime}(1)=u^{\prime \prime}(0)=0, \ u(0)=\int_{0}^{1}a(s)u(s)ds.
\end{equation}

\begin{lemma}\label{lem 2.4}
The problem \eqref{eq-2.1}-\eqref{eq-2.2} has a unique solution
\[ u(t)=\int_{0}^{1}\Big(G(t, s)+\frac{1}{1-\alpha}\int_{0}^{1}a(\tau)G(\tau, s)d\tau \Big)y(s)ds,\]
where $G(t, s):[0, 1]\times[0, 1]\rightarrow \mathbb{R}$ is the Green's function defined by
\begin{equation} \label{eq-2.3}
G(t, s)=\frac{1}{6}\begin{cases} t^{3}(1-s)^{2}-(t-s)^{3},&0\leq s\leq t \leq1;  \\
t^{3}(1-s)^{2}, &0\leq t \leq s\leq 1,
\end{cases}
\end{equation}
and \[ \alpha=\int_{0}^{1}a(t)dt.\]
\end{lemma}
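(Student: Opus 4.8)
The plan is to integrate the equation directly and impose the boundary conditions one at a time, reducing the construction of the solution to solving a single scalar equation for the last free constant. Since $u''''(t)=-y(t)$, four successive integrations give the general solution
\[
u(t) = -\frac{1}{6}\int_0^t (t-s)^3 y(s)\,ds + c_0 + c_1 t + c_2 t^2 + c_3 t^3,
\]
with four arbitrary constants $c_0,\dots,c_3$. Differentiating under the integral sign (the boundary term vanishes because $(t-s)^3=0$ at $s=t$) yields explicit formulas for $u'(t)$, $u''(t)$, $u'''(t)$, which I will evaluate at $t=0$ and $t=1$ to read off the boundary data.

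First I would apply the three conditions not involving the integral. The conditions $u''(0)=0$ and $u'(0)=0$ force $c_2=0$ and $c_1=0$ respectively, while $u'(1)=0$ gives $c_3=\tfrac{1}{6}\int_0^1 (1-s)^2 y(s)\,ds$. Substituting these back, the solution collapses to
\[
u(t) = c_0 + \frac{t^3}{6}\int_0^1 (1-s)^2 y(s)\,ds - \frac{1}{6}\int_0^t (t-s)^3 y(s)\,ds = c_0 + \int_0^1 G(t,s)y(s)\,ds,
\]
where splitting the range of integration at $s=t$ recovers exactly the two branches of $G(t,s)$ in \eqref{eq-2.3}. At this stage $c_0=u(0)$ is the only remaining free parameter.

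The final, and most delicate, step is to pin down $c_0$ using the integral condition $u(0)=\int_0^1 a(s)u(s)\,ds$. Since $G(0,s)=0$, the left side is simply $c_0$. Inserting the expression for $u$ into the right side and interchanging the order of integration (Fubini) gives
\[
c_0 = c_0\int_0^1 a(s)\,ds + \int_0^1\Big(\int_0^1 a(\tau)G(\tau,s)\,d\tau\Big)y(s)\,ds = c_0\,\alpha + \int_0^1\Big(\int_0^1 a(\tau)G(\tau,s)\,d\tau\Big)y(s)\,ds.
\]
Here hypothesis (H2) is essential: because $0<\alpha<1$, the coefficient $1-\alpha$ is nonzero, so this linear equation has the unique solution $c_0=\tfrac{1}{1-\alpha}\int_0^1\big(\int_0^1 a(\tau)G(\tau,s)\,d\tau\big)y(s)\,ds$. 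Substituting $c_0$ back and combining the two integrals under a single $\int_0^1(\cdots)y(s)\,ds$ produces the asserted formula. Uniqueness is automatic, since each constant $c_0,\dots,c_3$ was forced at its respective stage; the only genuine obstacle is the bookkeeping in the Fubini interchange and in matching the piecewise definition of $G$, both of which become routine once the non-degeneracy $\alpha\neq 1$ supplied by (H2) is in hand.
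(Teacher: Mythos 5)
Your proposal is correct and follows essentially the same route as the paper: four direct integrations, the conditions $u'(0)=u''(0)=0$ forcing $c_1=c_2=0$, $u'(1)=0$ fixing $c_3=\tfrac{1}{6}\int_0^1(1-s)^2y(s)\,ds$, and the nonlocal condition reducing to a linear scalar equation for the remaining constant, uniquely solvable because (H2) gives $1-\alpha\neq 0$. The only difference is organizational: you identify $G(t,s)$ \emph{before} imposing the integral condition, so the Fubini step is a one-line computation, whereas the paper solves for $C_4$ first and then rearranges the iterated integrals to expose the Green's function --- the same mathematics with slightly cleaner bookkeeping.
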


\begin{proof}
Integrating \eqref{eq-2.1} over the interval $[0, t]$ for $t\in[0, 1]$, we obtain
\begin{align*}
&u^{\prime \prime \prime}(t)=-\int_{0}^{t}y(s)ds+C_{1},\\
&u^{\prime \prime}(t)=-\int_{0}^{t}(t-s)y(s)ds+C_{1}t+C_{2},\\
&u^{\prime}(t)=-\frac{1}{2}\int_{0}^{t}(t-s)^{2}y(s)ds+\frac{1}{2}C_{1}t^{2}+C_{2}t+C_{3},\\
\end{align*}
\begin{align}\label{eq-2.4}
&u(t)=-\frac{1}{6}\int_{0}^{t}(t-s)^{3}y(s)ds+\frac{1}{6}C_{1}t^{3}+\frac{1}{2}C_{2}t^{2}+C_{3}t+C_{4}.
\end{align}
From the boundary conditions \eqref{eq-2.2} we get
\[C_{2}=C_{3}=0, \ \ C_{1}=\int_{0}^{1}(1-s)^{2}y(s)ds,\]
and
\begin{eqnarray*}
C_{4}&=&u(0)\\
&=&\int_{0}^{1}a(\tau)\Big(-\frac{1}{6}\int_{0}^{\tau}(\tau-s)^{3}y(s)ds+
\frac{1}{6}\tau^{3}\int_{0}^{1}(1-s)^{2}y(s)ds+C_{4}\Big)d\tau \\
&=&-\frac{1}{6}\int_{0}^{1}a(\tau)\Big(\int_{0}^{\tau}(\tau-s)^{3}y(s)ds\Big)d\tau+
\frac{1}{6}\int_{0}^{1}a(\tau)\tau^{3}\Big(\int_{0}^{1}(1-s)^{2}y(s)ds\Big)d\tau\\
&&+ C_{4}\int_{0}^{1}a(\tau)d\tau,
\end{eqnarray*}
which implies
\[C_{4}=\frac{1}{6(1-\alpha)}\Big(\int_{0}^{1}a(\tau)\tau^{3}\Big(\int_{0}^{1}(1-s)^{2}y(s)ds\Big)d\tau-
\int_{0}^{1}a(\tau)\Big(\int_{0}^{\tau}(\tau-s)^{3}y(s)ds\Big)d\tau\Big).\]
Replacing these expressions in \eqref{eq-2.4}, we get
\begin{eqnarray*}
u(t)&=&-\frac{1}{6}\int_{0}^{t}(t-s)^{3}y(s)ds+\frac{1}{6}t^{3}\int_{0}^{1}(1-s)^{2}y(s)ds\\
&&+\frac{1}{6(1-\alpha)}\Big[\int_{0}^{1}a(\tau)\tau^{3}\Big(\int_{0}^{1}(1-s)^{2}y(s)ds\Big)d\tau\\
&&-\int_{0}^{1}a(\tau)\Big(\int_{0}^{\tau}(\tau-s)^{3}y(s)ds\Big)d\tau\Big]\\
&=&\frac{1}{6}\int_{0}^{t}\Big[t^{3}(1-s)^{2}-(t-s)^{3}\Big]y(s)ds+\frac{1}{6}\int_{t}^{1}t^{3}(1-s)^{2}y(s)ds\\
&&+\frac{1}{6(1-\alpha)}\Big[\int_{0}^{1}a(\tau)\Big(\int_{0}^{\tau}\Big[\tau^{3}(1-s)^{2}-(\tau-s)^{3}\Big]y(s)ds\\
&&+\int_{\tau}^{1}\tau^{3}(1-s)^{2}y(s)ds\Big)d\tau\Big]\\
&=&\frac{1}{6}\int_{0}^{t}\Big[t^{3}(1-s)^{2}-(t-s)^{3}\Big]y(s)ds+\frac{1}{6}\int_{t}^{1}t^{3}(1-s)^{2}y(s)ds\\
&&+\frac{1}{1-\alpha}\int_{0}^{1}\Big(\frac{1}{6}\int_{0}^{\tau}a(\tau)\Big[\tau^{3}(1-s)^{2}-(\tau-s)^{3}\Big]y(s)ds\\
&&+\frac{1}{6}\int_{\tau}^{1}a(\tau)\tau^{3}(1-s)^{2}y(s)ds\Big)d\tau\\
&=&\int_{0}^{1}G(t, s)y(s)ds+\frac{1}{1-\alpha}\int_{0}^{1}\Big(\int_{0}^{1}a(\tau)G(\tau, s)y(s)ds\Big)d\tau\\
&=&\int_{0}^{1}\Big(G(t, s)+\frac{1}{1-\alpha}\int_{0}^{1}a(\tau)G(\tau, s)d\tau\Big)y(s)ds.
\end{eqnarray*}
\end{proof}

\begin{lemma}\label{lem 2.5}
Let $\theta\in]0, \frac{1}{2}[$ be fixed. Then
\begin{itemize}
  \item [(i)] $G(t, s)\geq0$,  \ for all $t, s\in[0, 1];$
  \item [(ii)]$\frac{1}{6}\theta^{3}s(1-s)^{2}\leq G(t, s)\leq \frac{1}{6}s(1-s)^{2}$, \ for all $(t, s)\in[\theta, 1-\theta]\times[0, 1]$.
\end{itemize}
\end{lemma}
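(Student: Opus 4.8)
The plan is to treat the two assertions separately, exploiting the piecewise definition of $G$ together with one elementary inequality that drives everything: for $0\le s\le t\le 1$ one has $t(1-s)\ge t-s\ge 0$, since $t(1-s)-(t-s)=s(1-t)\ge 0$. For part (i) I would split according to the two branches. On $0\le t\le s$ we have $G(t,s)=\frac{1}{6}t^{3}(1-s)^{2}\ge 0$ trivially. On $0\le s\le t$ it suffices to show $t^{3}(1-s)^{2}\ge (t-s)^{3}$; cubing the basic inequality gives $t^{3}(1-s)^{3}\ge (t-s)^{3}$, and since $1-s\in[0,1]$ we have $(1-s)^{2}\ge (1-s)^{3}$, whence $t^{3}(1-s)^{2}\ge t^{3}(1-s)^{3}\ge (t-s)^{3}$.

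For part (ii), the key structural fact I would establish first is that, for each fixed $s$, the map $t\mapsto G(t,s)$ is nondecreasing on $[0,1]$. On the branch $t\le s$ its $t$-derivative is $\frac{1}{2}t^{2}(1-s)^{2}\ge 0$; on the branch $t\ge s$ it equals $\frac{1}{2}\bigl[t^{2}(1-s)^{2}-(t-s)^{2}\bigr]$, which is nonnegative precisely because $t(1-s)\ge t-s\ge 0$ forces $[t(1-s)]^{2}\ge (t-s)^{2}$. Continuity at $t=s$ is immediate since both branches yield $\frac{1}{6}s^{3}(1-s)^{2}$ there. Granting this monotonicity, the upper bound follows at once: $G(t,s)\le G(1,s)=\frac{1}{6}(1-s)^{2}\bigl[1-(1-s)\bigr]=\frac{1}{6}s(1-s)^{2}$ for every $t\in[0,1]$, in particular on $[\theta,1-\theta]$.

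For the lower bound, monotonicity gives $\min_{t\in[\theta,1-\theta]}G(t,s)=G(\theta,s)$, so it remains to verify $G(\theta,s)\ge \frac{1}{6}\theta^{3}s(1-s)^{2}$ for all $s\in[0,1]$. I would distinguish the cases $s\ge\theta$ and $s\le\theta$. If $s\ge\theta$, then $(\theta,s)$ lies in the branch $t\le s$ and $G(\theta,s)=\frac{1}{6}\theta^{3}(1-s)^{2}\ge \frac{1}{6}\theta^{3}s(1-s)^{2}$ because $s\le 1$. If $s\le\theta$, then $(\theta,s)$ lies in the branch $t\ge s$ and, after rearranging, the claim reduces to $\theta^{3}(1-s)^{3}\ge (\theta-s)^{3}$, which is the cube of $\theta(1-s)\ge \theta-s\ge 0$, i.e. the basic inequality with $t$ replaced by $\theta$.

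I expect the one genuinely delicate point to be locating the minimum of $G(\cdot,s)$ over $[\theta,1-\theta]$; once the monotonicity in $t$ is in hand the minimum sits at the left endpoint $t=\theta$ and everything else is a routine application of the single inequality $t(1-s)\ge t-s$. This also explains the shape of the estimate: the factor $\theta^{3}$ is exactly what appears when $t=\theta$ is substituted into the branch $t\le s$, which is why the lower bound is stated on $[\theta,1-\theta]$ rather than on all of $[0,1]$.
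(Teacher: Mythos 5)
Your proof is correct, but it takes a genuinely different route from the paper's. The paper argues pointwise: for $s\le t$ it bounds $G$ below via the factorization $[t(1-s)]^{3}-(t-s)^{3}=s(1-t)\bigl[t^{2}(1-s)^{2}+t(1-s)(t-s)+(t-s)^{2}\bigr]$, giving $G(t,s)\ge\frac{1}{6}t^{2}(1-t)s(1-s)^{2}$, and above via the algebraic identity $G(t,s)-\frac{1}{6}s(1-s)^{2}=\frac{1}{6}s(t-1)^{2}\bigl[(s-2)t+2s-1\bigr]\le 0$; combining with the easy branch $t\le s$ it obtains $\rho(t)s(1-s)^{2}\le G(t,s)\le\frac{1}{6}s(1-s)^{2}$ on all of $[0,1]^{2}$ with $\rho(t)=\frac{1}{6}\min\{t^{3},t^{2}(1-t)\}$, and then restricts $t$ to $[\theta,1-\theta]$, where $\rho(t)\ge\frac{1}{6}\theta^{3}$. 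You instead prove that $t\mapsto G(t,s)$ is nondecreasing (one-sided $t$-derivatives $\frac{1}{2}t^{2}(1-s)^{2}$ and $\frac{1}{2}\bigl[t^{2}(1-s)^{2}-(t-s)^{2}\bigr]$ are nonnegative by the single inequality $t(1-s)\ge t-s\ge 0$, with continuity at the kink $t=s$), so the maximum over $t$ is $G(1,s)=\frac{1}{6}s(1-s)^{2}$ and the minimum over $[\theta,1-\theta]$ is $G(\theta,s)$, which you correctly bound below by $\frac{1}{6}\theta^{3}s(1-s)^{2}$ in both cases $s\ge\theta$ and $s\le\theta$ (the latter reducing to $\theta^{3}(1-s)^{3}\ge(\theta-s)^{3}$). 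Your approach buys several things: it avoids the paper's two nontrivial polynomial factorizations, it shows the upper bound is sharp (attained at $t=1$), and it yields the lower bound $\frac{1}{6}\theta^{3}s(1-s)^{2}$ on the larger interval $[\theta,1]$ for any $\theta\in\,]0,1[$, strictly better than the paper's pointwise bound, whose factor $\rho(t)=\frac{1}{6}t^{2}(1-t)$ degenerates as $t\to 1$. What the paper's route buys in exchange is an explicit two-sided pointwise estimate $\rho(t)s(1-s)^{2}\le G(t,s)$ valid on the whole square $[0,1]^{2}$, of the product form commonly reused in cone arguments, without invoking differentiability of $G$ in $t$.
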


\begin{proof}
{\rm (i)} We will show that $G(t, s)\geq0$, \ for all $(t, s)\in[0, 1]\times[0, 1]$.
Since it is obvious for $t\leq s$, we only need to prove the case $s\leq t$. Now we suppose that $s\leq t$. Then
\begin{gather}
\begin{aligned}
G(t, s)&=\frac{1}{6}\Big[t^{3}(1-s)^{2}-(t-s)^{3}\Big]=\frac{1}{6}\Big[t(t-ts)^{2}-(t-s)^{3}\Big]\\
&\geq\frac{1}{6}\Big[t(t-s)^{2}-(t-s)^{3}\Big]\\
&\geq\frac{1}{6}(t-s)^{2}\Big[t-(t-s)\Big]\\
&=\frac{1}{6}s(t-s)^{2}\geq 0.
\end{aligned}\label{eq-2.5}
\end{gather}

{\rm (ii)} If $s\leq t$, from \eqref{eq-2.3} we have
\begin{gather}
\begin{aligned}
G(t, s)&=\frac{1}{6}\Big[t^{3}(1-s)^{2}-(t-s)^{3}\Big]\geq\frac{1}{6}\Big[t^{3}(1-s)^{3}-(t-s)^{3}\Big]\\
&=\frac{1}{6}\Big[(t-ts)^{3}-(t-s)^{3}\Big]\\
&=\frac{1}{6}s(1-t)\Big[t^{2}(1-s)^{2}+t(1-s)(t-s)+(t-s)^{2}\Big]\\
&\geq\frac{1}{6}t^{2}(1-t)s(1-s)^{2}.
\end{aligned}\label{eq-2.6}
\end{gather}

On the other hand
\begin{gather}
\begin{aligned}
G(t,s)-\frac{1}{6}s(1-s)^{2}&=\frac{1}{6}t^{3}(1-s)^{2}-\frac{1}{6}(t-s)^{3}-
\frac{1}{6}s(1-s)^{2}\\
&=\frac{1}{6}s\Big(-2t^{3}+t^{3}s+3t^{2}-3ts-1+2s\Big)\\
&=\frac{1}{6}s(t-1)^{2}\Big[(s-2)t+2s-1\Big]\\
&\leq\frac{1}{6}s(t-1)^{2}\Big[(s-2)t+2t-1\Big]\\
&=\frac{1}{6}s(t-1)^{2}(st-1)\leq0.\\
\end{aligned}\label{eq-2.7}
\end{gather}
If $t\leq s$, from \eqref{eq-2.3}, we have
\begin{equation}\label{eq-2.8}
G(t, s)=\frac{1}{6}t^{3}(1-s)^{2}\geq \frac{1}{6}t^{3}s(1-s)^{2},
\end{equation}
and,
\begin{equation}\label{eq-2.9}
G(t, s)=\frac{1}{6}t^{3}(1-s)^{2}\leq\frac{1}{6}s^{3}(1-s)^{2}\leq \frac{1}{6}s(1-s)^{2}.
\end{equation}
Let
\begin{equation*}
\rho(t)=\frac{1}{6}\min\{t^{3}, t^{2}(1-t)\}=\frac{1}{6}\begin{cases} t^{3},& t\leq \frac{1}{2};  \\
t^{2}(1-t), & t \geq \frac{1}{2}.
\end{cases}
\end{equation*}
From \eqref{eq-2.6}, \eqref{eq-2.7}, \eqref{eq-2.8} and \eqref{eq-2.9} we have
\[\rho(t)s(1-s)^{2}\leq G(t, s)\leq \frac{1}{6}s(1-s)^{2}, \ \text{for all} \ (t, s)\in[0, 1]\times[0, 1].\]
For $\theta\in]0, \frac{1}{2}[$, we have
\[\frac{\theta^{3}}{6}s(1-s)^{2}\leq G(t, s)\leq \frac{1}{6}s(1-s)^{2}, \ \text{for all} \ (t, s)\in[\theta, 1-\theta]\times[0, 1].\]
\end{proof}

\begin{lemma}\label{lem 2.6}
Let $y(t)\in C([0, 1], [0, \infty))$ and $\theta\in]0, \frac{1}{2}[$. The unique solution of \eqref{eq-2.1}-\eqref{eq-2.2} is nonnegative and satisfies
\[ \min_{t\in[\theta,1-\theta]}u(t)\geq \theta^{3}(1-\alpha+\beta) \|u\|,\]
where $\beta=\int_{\theta}^{1-\theta}a(t)dt$,\ \ $\alpha=\int_{0}^{1}a(t)dt$.
\end{lemma}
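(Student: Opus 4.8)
The plan is to read off the integral representation of $u$ from Lemma~\ref{lem 2.4} and then estimate it using the two-sided Green's function bounds of Lemma~\ref{lem 2.5}. Throughout I abbreviate
\[ I=\int_0^1 \tfrac{1}{6}s(1-s)^2\,y(s)\,ds, \]
which is finite and nonnegative because $y\geq 0$; recall also that $1-\alpha>0$ by (H2) and that $a\geq 0$. Nonnegativity of $u$ is immediate: since $G(t,s)\geq 0$ for all $(t,s)$ by Lemma~\ref{lem 2.5}(i), and since $a(\tau)\geq 0$, $y(s)\geq 0$ and $\frac{1}{1-\alpha}>0$, every factor in
\[ u(t)=\int_0^1\Big(G(t,s)+\tfrac{1}{1-\alpha}\int_0^1 a(\tau)G(\tau,s)\,d\tau\Big)y(s)\,ds \]
is nonnegative, so $u(t)\geq 0$ on $[0,1]$ and $\|u\|=\sup_{t\in[0,1]}u(t)$.

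Next I would bound $\|u\|$ from above. Here I use the \emph{universal} upper bound $G(t,s)\leq\frac{1}{6}s(1-s)^2$, valid for all $(t,s)\in[0,1]\times[0,1]$ as established inside the proof of Lemma~\ref{lem 2.5}, in both occurrences of $G$. Since $\int_0^1 a(\tau)\,d\tau=\alpha$, this gives, for every $t\in[0,1]$,
\[ u(t)\leq I+\frac{\alpha}{1-\alpha}\,I=\frac{1}{1-\alpha}\,I, \]
and therefore $I\geq(1-\alpha)\|u\|$.

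The main step is the lower bound on $[\theta,1-\theta]$. Fix $t\in[\theta,1-\theta]$. For the first integral, Lemma~\ref{lem 2.5}(ii) applies directly and yields $\int_0^1 G(t,s)y(s)\,ds\geq\theta^3 I$. For the second integral the only slightly delicate point is that the lower bound on $G(\tau,s)$ is available only for $\tau\in[\theta,1-\theta]$; since the integrand is nonnegative I may first discard the complementary range and then apply the bound,
\[ \int_0^1 a(\tau)G(\tau,s)\,d\tau\geq\int_\theta^{1-\theta}a(\tau)G(\tau,s)\,d\tau\geq\theta^3\beta\cdot\tfrac{1}{6}s(1-s)^2, \]
using $\int_\theta^{1-\theta}a(\tau)\,d\tau=\beta$. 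Combining the two pieces gives, for every $t\in[\theta,1-\theta]$,
\[ u(t)\geq\theta^3 I+\frac{\theta^3\beta}{1-\alpha}\,I=\theta^3\,\frac{1-\alpha+\beta}{1-\alpha}\,I, \]
and substituting $I\geq(1-\alpha)\|u\|$ yields $u(t)\geq\theta^3(1-\alpha+\beta)\|u\|$. Taking the minimum over $[\theta,1-\theta]$ finishes the argument. The computation is otherwise routine; the one point to watch is truncating the $\tau$-integral to $[\theta,1-\theta]$ before invoking the Green's function lower bound, which is precisely what produces the constant $\beta$ in the conclusion.
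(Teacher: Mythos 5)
Your proposal is correct and follows essentially the same route as the paper's own proof: the universal upper bound $G(t,s)\leq\frac{1}{6}s(1-s)^{2}$ together with $\int_{0}^{1}a(\tau)\,d\tau=\alpha$ gives $\|u\|\leq\frac{1}{1-\alpha}\int_{0}^{1}\frac{1}{6}s(1-s)^{2}y(s)\,ds$, and the lower bound on $[\theta,1-\theta]$ with the $\tau$-integral truncated to $[\theta,1-\theta]$ (producing $\beta$) matches the paper's estimates \eqref{eq-2.10} and \eqref{eq-2.11} exactly. Your abbreviation $I$ and the explicit remark about why truncation is legitimate are only presentational differences.
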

\begin{proof}
From Lemma \ref{lem 2.4} and Lemma \ref{lem 2.5}, $u(t)$ is nonnegative. For $t\in[0, 1]$, from Lemma \ref{lem 2.4} and Lemma \ref{lem 2.5}, we have that
\begin{eqnarray*}
u(t)&=&\int_{0}^{1}\Big(G(t, s)+\frac{1}{1-\alpha}\int_{0}^{1}a(\tau)G(\tau, s)d\tau \Big)y(s)ds\\
&\leq&\frac{1}{6}\int_{0}^{1}s(1-s)^{2}\Big(1+\frac{\alpha}{1-\alpha}\Big)y(s)ds\\
&=&\frac{1}{6(1-\alpha)}\int_{0}^{1}s(1-s)^{2}y(s)ds.
\end{eqnarray*}
Then
\begin{equation}\label{eq-2.10}
\|u\|\leq\frac{1}{6(1-\alpha)}\int_{0}^{1}s(1-s)^{2}y(s)ds,
\end{equation}
and for $t\in[\theta,1-\theta]$, we have
\begin{gather}
\begin{aligned}
u(t)&=\int_{0}^{1}\Big(G(t, s)+\frac{1}{1-\alpha}\int_{0}^{1}a(\tau)G(\tau, s)d\tau\Big)y(s)ds\\
&\geq\frac{\theta^{3}}{6}\int_{0}^{1}\Big[s(1-s)^{2}+\frac{1}{1-\alpha}\int_{\theta}^{1-\theta}s(1-s)^{2}a(\tau)d\tau\Big]y(s)ds\\
&=\frac{\theta^{3}}{6}\int_{0}^{1}s(1-s)^{2}\Big(1+\frac{\beta}{1-\alpha}\Big)y(s)ds\\
&=\frac{\theta^{3}}{6}\cdot\frac{1-\alpha+\beta}{1-\alpha}\int_{0}^{1}s(1-s)^{2}y(s)ds.
\end{aligned}\label{eq-2.11}
\end{gather}

From \eqref{eq-2.10}, \eqref{eq-2.11}, we obtain
\[ \min_{t\in[\theta,1-\theta]}u(t)\geq \theta^{3}(1-\alpha+\beta) \|u\|.\]
\end{proof}

Let $\theta\in]0, \frac{1}{2}[$. We define the cone
\[K=\left\{u\in C([0, 1], \mathbb{R}), u\geq0:  \min_{t\in[\theta,1-\theta]}u(t)\geq \theta^{3}(1-\alpha+\beta) \|u\|\right\},\]
and the operator $A:K\rightarrow C[0, 1]$ by

\begin{equation}\label{eq-2.12}
Au(t)=\int_{0}^{1}\Big(G(t, s)+\frac{1}{1-\alpha}\int_{0}^{1}a(\tau)G(\tau, s)d\tau\Big)f(u(s))ds.
\end{equation}

\begin{remark}
By Lemma \ref{lem 2.4}, problem \eqref{eq-1.1}, \eqref{eq-1.2} has a positive solution $u(t)$ if and only if $u$ is a fixed point of $A$.
\end{remark}

\begin{lemma}\label{lem 2.7}
The operator $A$ defined in \eqref{eq-2.12} is completely continuous and satisfies $AK\subset K$.
\end{lemma}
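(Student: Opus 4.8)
The plan is to treat the two assertions separately: the inclusion $AK\subset K$, and the complete continuity of $A$.

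For the inclusion, fix $u\in K$ and set $y(s)=f(u(s))$. Since $u\geq 0$ and $f\in C([0,\infty),[0,\infty))$ by (H1), the composition $y$ belongs to $C([0,1],[0,\infty))$. Comparing \eqref{eq-2.12} with the representation in Lemma \ref{lem 2.4}, we see that $Au$ is exactly the unique solution of \eqref{eq-2.1}--\eqref{eq-2.2} associated with this $y$. Consequently Lemma \ref{lem 2.6} applies verbatim and yields at once that $Au$ is nonnegative and that
\[\min_{t\in[\theta,1-\theta]}Au(t)\geq \theta^{3}(1-\alpha+\beta)\,\|Au\|.\]
Together with the continuity of $t\mapsto Au(t)$ (which follows from the continuity of $G(t,s)$ in $t$ and is also a by-product of the equicontinuity estimate below), this shows $Au\in K$, hence $AK\subset K$.

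Next I would verify continuity of $A$. Let $u_{n}\to u$ in $K$. The sequence $\{u_{n}\}$ is bounded, so all the functions take values in a common compact interval $[0,M]$; since $f$ is uniformly continuous there, $f(u_{n}(\cdot))\to f(u(\cdot))$ uniformly on $[0,1]$. Using the bound already obtained in \eqref{eq-2.10} (valid since $0<\alpha<1$ by (H2)), one estimates
\[\|Au_{n}-Au\|\leq \frac{1}{6(1-\alpha)}\Big(\int_{0}^{1}s(1-s)^{2}\,ds\Big)\,\|f(u_{n}(\cdot))-f(u(\cdot))\|\longrightarrow 0,\]
so $A$ is continuous.

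For compactness I would invoke the Arzel\`a--Ascoli theorem. Let $B\subset K$ be bounded, say $\|u\|\leq M$ for all $u\in B$, and put $L=\max_{0\leq x\leq M}f(x)$. Uniform boundedness of $A(B)$ follows from $\|Au\|\leq \frac{L}{6(1-\alpha)}\int_{0}^{1}s(1-s)^{2}\,ds$. For equicontinuity the key simplification is that the second term in \eqref{eq-2.12}, namely $\frac{1}{1-\alpha}\int_{0}^{1}a(\tau)G(\tau,s)\,d\tau$, does not depend on $t$; hence for $t_{1},t_{2}\in[0,1]$,
\[|Au(t_{1})-Au(t_{2})|\leq L\int_{0}^{1}|G(t_{1},s)-G(t_{2},s)|\,ds,\]
and since $G$ is uniformly continuous on the compact square $[0,1]\times[0,1]$, the right-hand side is small uniformly in $u\in B$ whenever $|t_{1}-t_{2}|$ is small. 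Arzel\`a--Ascoli then gives that $A(B)$ is relatively compact, so $A$ is completely continuous. The computation is essentially routine; the only point requiring a little care is the equicontinuity step, but this is disarmed by the observation that the $t$-dependence of $Au$ sits entirely in the single Green's-function term $G(t,s)$. In short, the substantive work has already been carried out in Lemmas \ref{lem 2.4}--\ref{lem 2.6}, and the present lemma is obtained by feeding $y=f\circ u$ into those results and adding a standard Arzel\`a--Ascoli argument.
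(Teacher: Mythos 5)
Your proof is correct and follows the same route as the paper, which simply cites Lemma \ref{lem 2.6} for the inclusion $AK\subset K$ and invokes the Arzel\`a--Ascoli theorem for complete continuity without writing out the details. You have merely supplied those routine details (uniform continuity of $f$ on bounded sets for continuity, and the uniform bound $G(t,s)\leq\frac{1}{6}s(1-s)^{2}$ together with uniform continuity of $G$ for equicontinuity), all of which check out.
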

\begin{proof}
From Lemma \ref{lem 2.6}, we obtain $AK\subset K$.  $A$ is completely continuous by an application of Arzela-Ascoli theorem.
\end{proof}

In what follows, we will use the following notations\\
\begin{equation*}
f_{0}=\lim_{u\to 0+}\frac{f(u)}{u}, \ \
f_{\infty}=\lim_{u\to\infty}\frac{f(u)}{u}.
\end{equation*}
We note that the case $f_{0}=0$ and $f_{\infty}=\infty$ corresponds to the superlinear case and
$f_{0}=\infty$ and $f_{\infty}=0$ corresponds to the sublinear case.

\section{Existence of positive solutions}
In this section, we will state and prove our main results.

\begin{theorem}\label{theo 3.1}
Assume that $f_{0}=0$ and $f_{\infty}=\infty$. Then BVP \eqref{eq-1.1} and \eqref{eq-1.2}
has at least one positive solution.
\end{theorem}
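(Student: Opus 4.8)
The plan is to apply Krasnoselskii's fixed point theorem (Theorem~\ref{thm 2.3}) to the operator $A$ of \eqref{eq-2.12} on the cone $K$. By Lemma~\ref{lem 2.7}, $A:K\to K$ is completely continuous, and by the Remark any fixed point of $A$ in $K$ is a nonnegative solution of \eqref{eq-1.1}--\eqref{eq-1.2}. Since a fixed point produced below will lie in $K\cap(\overline{\Omega_2}\setminus\Omega_1)$ and hence satisfy $\|u\|\ge r_1>0$, it will be a positive solution. As we are in the superlinear case ($f_0=0$, $f_\infty=\infty$), I would use alternative (i) of Theorem~\ref{thm 2.3}: establish $\|Au\|\le\|u\|$ on a small sphere $\partial\Omega_1$ and $\|Au\|\ge\|u\|$ on a large sphere $\partial\Omega_2$, with $0\in\Omega_1$ and $\overline{\Omega_1}\subset\Omega_2$.

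For the small sphere I would exploit $f_0=0$: given any $\varepsilon>0$ there is $r_1>0$ such that $f(u)\le\varepsilon u$ for $0\le u\le r_1$. Using the upper bound $G(t,s)\le\frac16 s(1-s)^2$ from Lemma~\ref{lem 2.5} together with the estimate already obtained in the proof of Lemma~\ref{lem 2.6}, for $u\in K$ with $\|u\|=r_1$ one gets
\[
\|Au\|\le\frac{1}{6(1-\alpha)}\int_0^1 s(1-s)^2 f(u(s))\,ds\le\frac{\varepsilon\|u\|}{6(1-\alpha)}\int_0^1 s(1-s)^2\,ds.
\]
Since $\int_0^1 s(1-s)^2\,ds=\tfrac{1}{12}$, choosing $\varepsilon$ so small that $\dfrac{\varepsilon}{72(1-\alpha)}\le 1$ yields $\|Au\|\le\|u\|$ for $u\in K\cap\partial\Omega_1$, where $\Omega_1=\{u\in C[0,1]:\|u\|<r_1\}$.

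For the large sphere I would exploit $f_\infty=\infty$: given a large $M>0$ there is $R>0$ with $f(u)\ge Mu$ for $u\ge R$. Fix $t_0\in[\theta,1-\theta]$. Starting from the lower estimate in \eqref{eq-2.11} and discarding the nonnegative tail (so as to integrate only over $[\theta,1-\theta]$), then using that $u\in K$ satisfies $u(s)\ge\theta^3(1-\alpha+\beta)\|u\|$ on that subinterval, I would choose $R_2\ge R/\big(\theta^3(1-\alpha+\beta)\big)$ so that $\|u\|=R_2$ forces $u(s)\ge R$, hence $f(u(s))\ge Mu(s)\ge M\theta^3(1-\alpha+\beta)\|u\|$, for all $s\in[\theta,1-\theta]$. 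This gives
\[
\|Au\|\ge Au(t_0)\ge\frac{\theta^3}{6}\cdot\frac{1-\alpha+\beta}{1-\alpha}\int_\theta^{1-\theta}s(1-s)^2 f(u(s))\,ds\ge C\,M\,\|u\|,
\]
where $C=\dfrac{\theta^6(1-\alpha+\beta)^2}{6(1-\alpha)}\displaystyle\int_\theta^{1-\theta}s(1-s)^2\,ds>0$. Choosing $M\ge 1/C$ yields $\|Au\|\ge\|u\|$ for $u\in K\cap\partial\Omega_2$, where $\Omega_2=\{u\in C[0,1]:\|u\|<R_2\}$.

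Finally, combining the two estimates and applying alternative (i) of Theorem~\ref{thm 2.3} produces a fixed point of $A$ in $K\cap(\overline{\Omega_2}\setminus\Omega_1)$, i.e.\ a positive solution of the BVP. I expect the large-sphere estimate to be the main obstacle: the asymptotic hypothesis $f_\infty=\infty$ only controls $f(u)$ for $u\ge R$, so the key is to restrict the integral to $[\theta,1-\theta]$ and invoke the cone inequality to guarantee that the values $u(s)$ there all exceed $R$ once $\|u\|=R_2$ is large enough. Care is also needed to verify $r_1<R_2$ (enlarging $R_2$ if necessary) so that $\overline{\Omega_1}\subset\Omega_2$, as required by the theorem.
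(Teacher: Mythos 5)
Your proposal is correct and takes essentially the same route as the paper's proof: Krasnoselskii's alternative (i), with the small sphere handled via $f_{0}=0$ and the upper bound $\|Au\|\leq\frac{1}{6(1-\alpha)}\int_{0}^{1}s(1-s)^{2}f(u(s))\,ds$, and the large sphere via $f_{\infty}=\infty$ combined with the cone inequality $u(s)\geq\theta^{3}(1-\alpha+\beta)\|u\|$ on $[\theta,1-\theta]$ to force $f(u(s))\geq Mu(s)$ there, exactly as in the paper's choice $\rho_{2}\geq\widehat{\rho}_{2}/\bigl(\theta^{3}(1-\alpha+\beta)\bigr)$. The only differences are cosmetic: you evaluate $\int_{0}^{1}s(1-s)^{2}\,ds=\tfrac{1}{12}$ exactly where the paper crudely bounds it by $1$, and you keep $\int_{\theta}^{1-\theta}s(1-s)^{2}\,ds$ abstract in the constant $C$ where the paper writes the explicit lower bound $\tfrac{1}{6}(1-2\theta)\bigl(\tfrac{1}{2}+\theta-\theta^{2}\bigr)$.
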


\begin{proof}

Since $f_{0}=0$, there exists $\rho_{1}>0$ such that  $f(u)\leq \epsilon u$, for  $0<u\leq \rho_{1}$, where $\epsilon>0$ satisfies
\begin{equation*}
\frac{\epsilon}{6(1-\alpha)}\leq1.
\end{equation*}
Thus, if we let
\begin{equation*}
\Omega_{1}=\left\{u\in C[0,1]: \|u\|<\rho_{1}\right\},
\end{equation*}
then, for $u\in K\cap \partial\Omega_{1}$, we have
\begin{gather}
\begin{aligned}
Au(t)&=\int_{0}^{1}\Big(G(t, s)+\frac{1}{1-\alpha}\int_{0}^{1}a(\tau)G(\tau, s)d\tau\Big)f(u(s))ds\\
&\leq\frac{1}{6}\int_{0}^{1}\Big(s(1-s)^{2}+\frac{1}{1-\alpha}\int_{0}^{1}s(1-s)^{2}a(\tau)d\tau\Big)\epsilon u(s)ds\\
&\leq\frac{1}{6}\cdot\frac{\epsilon}{1-\alpha}\|u\|\int_{0}^{1}s(1-s)^{2}ds\\
&\leq\frac{1}{6}\cdot\frac{\epsilon}{1-\alpha}\|u\|\\
&\leq\|u\|.
\end{aligned}\label{eq-3.1}
\end{gather}

Therefore
\[\|Au\|\leq\|u\|, \ \ u\in K\cap\partial\Omega_{1}.\]

Further, since $f_{\infty}=\infty$, there exists $\widehat{\rho}_{2}>0$  such that $f(u)\geq \delta u$, for  $u> \widehat{\rho}_{2}$, where $ \delta>0$ is chosen so that

\begin{equation*}
\delta\frac{\theta^{6}}{36}\cdot\frac{(1-\alpha+\beta)^{2}}{1-\alpha}(1-2\theta)(\frac{1}{2}+\theta-\theta^{2})\geq1.
\end{equation*}

Let $\rho_{2}=\max\Big\{2\rho_{1}, \frac{\widehat{\rho}_{2}}{\theta^{3}(1-\alpha+\beta)}\Big\}$ and $\Omega_{2}=\left\{u\in C[0, 1]: \|u\|<\rho_{2}\right\}$. Then $u\in K\cap \partial\Omega_{2}$ implies that

\[ \min_{t\in[\theta,1-\theta]}u(t)\geq \theta^{3}(1-\alpha+\beta) \|u\|=\theta^{3}(1-\alpha+\beta)\rho_{2}\geq\widehat{\rho}_{2},\]
so, by \eqref{eq-2.12} and for $t\in[\theta, 1-\theta]$, we obtain

\begin{gather}
\begin{aligned}
Au(t)&=\int_{0}^{1}\Big(G(t, s)+\frac{1}{1-\alpha}\int_{0}^{1}a(\tau)G(\tau, s)d\tau\Big)f(u(s))ds\\
&\geq\int_{\theta}^{1-\theta}\Big[\frac{\theta^{3}}{6}s(1-s)^{2}+
\frac{1}{1-\alpha}\int_{\theta}^{1-\theta}\frac{\theta^{3}}{6}s(1-s)^{2}a(\tau)d\tau\Big]\delta u(s)ds\\
&=\frac{\theta^{3}}{6}\delta\int_{\theta}^{1-\theta}s(1-s)^{2}\Big(1+\frac{\beta}{1-\alpha}\Big)u(s)ds\\
&=\frac{\theta^{3}\delta}{6}\cdot\frac{(1-\alpha+\beta)}{1-\alpha}\int_{\theta}^{1-\theta}s(1-s)^{2}u(s)ds\\
&\geq\frac{\theta^{3}\delta}{6}\cdot\frac{(1-\alpha+\beta)}{1-\alpha}\min_{t\in[\theta,1-\theta]}u(t)\int_{\theta}^{1-\theta}s(1-s)^{2}ds\\
&\geq\delta\frac{\theta^{6}}{36}\cdot\frac{(1-\alpha+\beta)^{2}}{1-\alpha}(1-2\theta)(\frac{1}{2}+\theta-\theta^{2})\|u\|\\
&\geq\|u\|.
\end{aligned}\label{eq-3.2}
\end{gather}

Hence, $\|Au\|\geq\|u\|, \ u\in K\cap\partial\Omega_{2}$. By Theorem \ref{thm 2.3}, the operator $A$ has a fixed point in $K\cap (\overline{\Omega _{2}}$ $\backslash $ $
\Omega_{1})$ such that $\rho_{1}\leq \|u\|\leq \rho_{2}$.
\end{proof}

\begin{theorem}\label{theo 3.2}
Assume that $f_{0}=\infty$ and $f_{\infty}=0$. Then BVP \eqref{eq-1.1} and \eqref{eq-1.2}
has at least one positive solution.
\end{theorem}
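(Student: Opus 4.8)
The plan is to prove this sublinear case as the exact mirror image of Theorem \ref{theo 3.1}, again applying Krasnoselskii's theorem (Theorem \ref{thm 2.3}), but now invoking its alternative (ii): the operator $A$ from \eqref{eq-2.12} will be made to satisfy $\|Au\|\geq\|u\|$ on the inner sphere and $\|Au\|\leq\|u\|$ on the outer sphere, which is the opposite of the superlinear situation. By Lemma \ref{lem 2.7}, $A:K\cap(\overline{\Omega_2}\setminus\Omega_1)\to K$ is completely continuous with $AK\subset K$, so it remains only to produce two radii $\rho_1<\rho_2$ and verify the two norm inequalities.

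For the inner estimate I would use $f_0=\infty$. This yields $\rho_1>0$ and a constant $M>0$, which may be taken as large as we wish, such that $f(u)\geq Mu$ for $0<u\leq\rho_1$; I would fix $M$ so that $M\frac{\theta^{6}}{36}\cdot\frac{(1-\alpha+\beta)^{2}}{1-\alpha}(1-2\theta)(\frac{1}{2}+\theta-\theta^{2})\geq1$. Setting $\Omega_1=\{u\in C[0,1]:\|u\|<\rho_1\}$, for $u\in K\cap\partial\Omega_1$ and $s\in[\theta,1-\theta]$ one has $0<\theta^{3}(1-\alpha+\beta)\|u\|\leq u(s)\leq\|u\|=\rho_1$, so the bound $f(u(s))\geq Mu(s)$ is available there. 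Restricting the integral in \eqref{eq-2.12} to $[\theta,1-\theta]$ and using the lower Green's function bound of Lemma \ref{lem 2.5}(ii), the computation proceeds verbatim as in \eqref{eq-3.2} (with $M$ playing the role of $\delta$) and gives $Au(t)\geq\|u\|$ for $t\in[\theta,1-\theta]$, whence $\|Au\|\geq\|u\|$ on $K\cap\partial\Omega_1$.

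For the outer estimate I would use $f_\infty=0$: fix $\epsilon>0$ with $\frac{\epsilon}{6(1-\alpha)}\leq1$. The one genuinely delicate point is that the defining estimate $f(u)\leq\epsilon u$ holds only for large arguments, whereas the upper bound for $Au$ integrates the Green's function over all of $[0,1]$, where $u(s)$ runs through small values near the endpoints at which $f(u)\leq\epsilon u$ may fail. I would resolve this in the standard way: since $f_\infty=0$ there is $R$ with $f(u)\leq\frac{\epsilon}{2}u$ for $u\geq R$, while $f$ is bounded on the compact set $[0,R]$ by some $N_R$; choosing $\rho_2\geq\max\{2\rho_1,\,R,\,2N_R/\epsilon\}$ then forces $\max_{0\leq u\leq\rho_2}f(u)\leq\epsilon\rho_2$. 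Consequently, for $u\in K\cap\partial\Omega_2$ with $\Omega_2=\{u\in C[0,1]:\|u\|<\rho_2\}$ we have $f(u(s))\leq\epsilon\rho_2=\epsilon\|u\|$ for every $s$, and the upper Green's function bound of Lemma \ref{lem 2.5}(ii) gives, exactly as in \eqref{eq-3.1},
\[
Au(t)\leq\frac{\epsilon\|u\|}{6}\int_{0}^{1}s(1-s)^{2}\Big(1+\frac{\alpha}{1-\alpha}\Big)ds=\frac{\epsilon}{6(1-\alpha)}\|u\|\int_{0}^{1}s(1-s)^{2}ds\leq\|u\|,
\]
so that $\|Au\|\leq\|u\|$ on $K\cap\partial\Omega_2$. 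Since $\rho_2>\rho_1$ gives $\Omega_1\subset\Omega_2$, part (ii) of Theorem \ref{thm 2.3} yields a fixed point $u\in K\cap(\overline{\Omega_2}\setminus\Omega_1)$ with $\rho_1\leq\|u\|\leq\rho_2$, the desired positive solution. As indicated, the main obstacle is this outer estimate: one must control $f$ uniformly on the full range $[0,\rho_2]$ rather than only for large arguments, and everything else copies the superlinear proof with the two inequalities interchanged.
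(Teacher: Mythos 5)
Your proposal is correct, and its skeleton coincides with the paper's proof: the same inner estimate from $f_{0}=\infty$ (your $M$ is the paper's $\lambda$, with the identical constant condition), the same cone, operator and Lemma \ref{lem 2.7}, and Krasnoselskii's alternative (ii). The one place you genuinely diverge is the construction of the outer radius. The paper deals with the fact that $f(u)\leq\eta u$ is only guaranteed for large $u$ by a case split: if $f$ is bounded by $L$ it takes $\rho_{2}=\max\{2\rho_{1},\,L/(6(1-\alpha))\}$; if $f$ is unbounded it invokes (H1) to produce $\sigma>0$ with $f(u)\leq\eta\sigma$ on $[0,\widehat{\rho}_{2}]$ and takes $\rho_{2}=\max\{\sigma,\widehat{\rho}_{2}\}$, obtaining $f(u(s))\leq\eta\rho_{2}$ on the sphere $\|u\|=\rho_{2}$ in either case. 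You instead fold both situations into one uniform estimate: with $f(u)\leq\frac{\epsilon}{2}u$ for $u\geq R$ and $f\leq N_{R}$ on the compact set $[0,R]$, the choice $\rho_{2}\geq\max\{2\rho_{1},R,2N_{R}/\epsilon\}$ forces $\max_{0\leq u\leq\rho_{2}}f(u)\leq\epsilon\rho_{2}$, after which the upper bound for $Au$ is the same computation as \eqref{eq-3.1}. The two routes are mathematically equivalent, but yours is slightly cleaner: it eliminates the case distinction and replaces the paper's somewhat opaque appeal to (H1) for the constant $\sigma$ (which is really just boundedness of the continuous $f$ on a compact interval) by an explicit choice of $\rho_{2}$. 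You also make explicit a point the paper leaves tacit in the inner estimate, namely that for $u\in K\cap\partial\Omega_{1}$ and $s\in[\theta,1-\theta]$ one has $0<\theta^{3}(1-\alpha+\beta)\rho_{1}\leq u(s)\leq\rho_{1}$, so the hypothesis $f(u)\geq Mu$ on $(0,\rho_{1}]$ is indeed applicable where the integral is restricted.
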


\begin{proof}
Since $f_{0}=\infty$, there exists $\rho_{1}>0$ such that  $f(u)\geq \lambda u$, for  $0<u\leq \rho_{1}$, where $ \lambda>0$ is chosen so that
\begin{equation*}
\lambda\frac{\theta^{6}}{36}\cdot\frac{(1-\alpha+\beta)^{2}}{1-\alpha}(1-2\theta)(\frac{1}{2}+\theta-\theta^{2})\geq1.
\end{equation*}
Thus, for $u\in K\cap \partial\Omega_{1}$ with
\[\Omega_{1}=\left\{u\in C[0, 1]: \|u\|<\rho_{1}\right\},\]
we have from \eqref{eq-3.2}
\begin{eqnarray*}
Au(t)&=&\int_{0}^{1}\Big(G(t, s)+\frac{1}{1-\alpha}\int_{0}^{1}a(\tau)G(\tau, s)d\tau\Big)f(u(s))ds\\
&\geq&\int_{\theta}^{1-\theta}\Big[\frac{\theta^{3}}{6}s(1-s)^{2}+
\frac{1}{1-\alpha}\int_{\theta}^{1-\theta}\frac{\theta^{3}}{6}s(1-s)^{2}a(\tau)d\tau\Big]\lambda u(s)ds\\
&\geq&\lambda\frac{\theta^{6}}{36}\cdot\frac{(1-\alpha+\beta)^{2}}{1-\alpha}(1-2\theta)(\frac{1}{2}+\theta-\theta^{2})\|u\|\\
&\geq&\|u\|.
\end{eqnarray*}
Then, $ Au(t)\geq \|u\|$ for $ t\in[\theta, 1-\theta]$ , which implies that
\[ \|Au\|\geq\|u\|, \ \ u\in K\cap\partial\Omega_{1}.\]

Next we construct the set $\Omega_{2}$. We discuss two possible cases:\

Case 1. If $f$ is bounded. Then, there exists $L>0$ such that $f(u)\leq L$.
Set $\Omega_{2}=\left\{u\in C[0, 1]: \|u\|<\rho_{2}\right\}$, where $\rho_{2}=\max\Big\{2\rho_{1}, \frac{L}{6(1-\alpha)}\Big\}$.

If $u\in K\cap\partial\Omega_{2}$, similar to the estimates
of \eqref{eq-3.1}, we obtain
\begin{eqnarray}
Au(t)&\leq&\frac{1}{6}\cdot\frac{L}{1-\alpha}\int_{0}^{1}s(1-s)^{2}ds\\
&\leq&\frac{1}{6}\cdot\frac{L}{1-\alpha}\leq \rho_{2}=\|u\|,
\end{eqnarray}
and consequently, $\|Au\|\leq\|u\|, \ \ u\in K\cap\partial\Omega_{2}$.

Case 2. Suppose that $f$ is unbounded, since $f_{\infty}=0$, there exists  $\widehat{\rho}_{2}>0$ ($\widehat{\rho}_{2}>\rho_{1}$) such that $f(u)\leq \eta u$ for $u>\widehat{\rho}_{2}$, where $\eta>0$ satisfies
\[\frac{\eta}{6(1-\alpha)}\leq 1.\]
On the other hand, from condition {\rm (H1)}, there is $\sigma>0$ such that
$f(u)\leq \eta\sigma$,\ \ with $0\leq u\leq \widehat{\rho}_{2}$.

Now, set $\Omega_{2}=\left\{u\in C[0, 1]: \|u\|<\rho_{2}\right\}$, where  $\rho_{2}=\max\{\sigma,\widehat{\rho}_{2}\}$.

If $u\in K\cap\partial\Omega_{2}$, then we have $f(u)\leq\eta\rho_{2}$. Similar to \eqref{eq-3.1}, we obtain
\begin{eqnarray}
Au(t)&\leq&\frac{1}{6}\cdot\frac{\eta \rho_{2}}{1-\alpha}\int_{0}^{1}s(1-s)^{2}ds\\
&\leq&\frac{1}{6}\cdot\frac{\eta \rho_{2}}{1-\alpha}\leq \rho_{2}=\|u\|,
\end{eqnarray}
so, $\|Au\|\leq\|u\|$, for $u\in K\cap\partial\Omega_{2}$.
Therefore by Theorem \ref{thm 2.3}, $A$ has at least one fixed point, which is a positive solution of \eqref{eq-1.1} and \eqref{eq-1.2}.
\end{proof}

\section{Examples}

\begin{exmp}
Consider the boundary value problem

\begin{equation}\label{eq-4.1}
{u^{\prime \prime \prime \prime }}(t)+u^{2}(e^{-u}+1)=0, \  \ 0<t<1,
\end{equation}
\begin{equation}\label{eq-4.2}
u^{\prime}(0)=u^{\prime}(1)=u^{\prime \prime}(0)=0, \ u(0)=\int_{0}^{1}s^{2}u(s)ds,
\end{equation}
where $f(u)=u^{2}(e^{-u}+1)\in C([0,\infty),[0,\infty))$ and $a(t)=t^{2}\geq0$, $\int_{0}^{1}a(s)ds=\int_{0}^{1}s^{2}ds=\frac{1}{3}$.

We have
\begin{align*}
\lim_{u\to 0+}\frac{f(u)}{u}&=\lim_{u\to 0+}\frac{u^{2}(e^{-u}+1)}{u}=0,\\
\lim_{u\to+\infty}\frac{f(u)}{u}&=\lim_{u\to+\infty}\frac{u^{2}(e^{-u}+1)}{u}=+\infty.
\end{align*}

From Theorem \ref{theo 3.1}, the problem \eqref{eq-4.1} and \eqref{eq-4.2} has at least one positive solution.
\end{exmp}

\begin{exmp}
Consider the boundary value problem
\begin{equation}\label{eq-4.3}
{u^{\prime \prime \prime \prime }}(t)+\sqrt{1+u}+\sin{u}=0, \  \ 0<t<1,
\end{equation}
\begin{equation}\label{eq-4.4}
u^{\prime}(0)=u^{\prime}(1)=u^{\prime \prime}(0)=0, \ u(0)=\int_{0}^{1}su(s)ds,
\end{equation}
where $f(u)=\sqrt{1+u}+\sin{u}\in C([0,\infty),[0,\infty))$ and $a(t)=t\geq0$, $\int_{0}^{1}a(s)ds=\int_{0}^{1}sds=\frac{1}{2}$.

We have
\begin{align*}
\lim_{u\to 0+}\frac{f(u)}{u}&=\lim_{u\to 0+}\frac{\sqrt{1+u}+\sin{u}}{u}=+\infty,\\
\lim_{u\to+\infty}\frac{f(u)}{u}&=\lim_{u\to+\infty}\frac{\sqrt{1+u}+\sin{u}}{u}=0.
\end{align*}

Therefore, by Theorem \ref{theo 3.2}, the problem \eqref{eq-4.3} and \eqref{eq-4.4} has at least one positive solution.
\end{exmp}



\end{document}